\documentclass[12pt]{article}
\usepackage{a4}
\usepackage{amsthm}
\usepackage{amsfonts}
\usepackage{amssymb}
\usepackage{amsmath}
\usepackage{cite}
\usepackage{epsfig}
\usepackage{mathtools}
\usepackage{hyperref}
\usepackage{ifthen}

\newif\iftinymath
\tinymathtrue

\newtheorem{theorem}{Theorem}
\newtheorem{corollary}[theorem]{Corollary}
\newtheorem{lemma}[theorem]{Lemma}
\newtheorem{proposition}[theorem]{Proposition}

\newcommand\BB{{\cal B}}
\newcommand\dd{\,\mbox{d}}
\newcommand\NN{{\mathbb N}}

\newcommand\RR{{\mathbb R}}

\DeclareTextCompositeCommand{\v}{OT1}{l}{l\nobreak\hspace{-.1em}'}
\DeclareTextCompositeCommand{\v}{OT1}{t}{t\nobreak\hspace{-.1em}'\nobreak\hspace{-.15em}}
\newcommand\Wf{\widehat{W}}

\begin{document}
\title{Quasirandom Latin squares\thanks{The work of the first and second authors have received funding from the European Research Council (ERC) under the European Union's Horizon 2020 research and innovation programme (grant agreement No 648509). This publication reflects only its authors' view; the European Research Council Executive Agency is not responsible for any use that may be made of the information it contains. All authors were also supported by the MUNI Award in Science and Humanities of the Grant Agency of Masaryk University.}}

\author{Jacob W. Cooper\thanks{Faculty of Informatics, Masaryk University, Botanick\'a 68A, 602 00 Brno, Czech Republic. E-mail: \tt{\{xcooper,dkral,lamaison,mohr\}@fi.muni.cz}.}\and
	\newcounter{lth} \setcounter{lth}{2}
        Daniel Kr{\'a}\v{l}$^\fnsymbol{lth}$\thanks{Past affiliation: Mathematics Institute, DIMAP and Department of Computer Science, University of Warwick, Coventry CV4 7AL, UK.}\and
        Ander Lamaison$^\fnsymbol{lth}$\and
	Samuel Mohr$^\fnsymbol{lth}$}
\date{} 
\maketitle

\begin{abstract}
We prove a conjecture by Garbe et al.~[arXiv:2010.07854]
by showing that a Latin square is quasirandom if and only
if the density of every $2\times 3$ pattern is $1/720+o(1)$.
This result is the best possible in the sense that
$2\times 3$ cannot be replaced with $2\times 2$ or $1\times n$ for any $n$.
\end{abstract}

\section{Introduction}
\label{sec:intro}

A combinatorial object is said to be quasirandom
if it has properties that a truly random object of the same kind would have almost surely.
The most developed is the theory of \emph{quasirandom graphs},
which can be traced back to the work of R\"odl~\cite{Rod86}, Thomason~\cite{Tho87} and
Chung, Graham and Wilson~\cite{ChuGW89} from the 1980s.
The notion of quasirandom graphs is particularly robust as
several diverse properties of random graphs involving subgraph density,
edge distribution and eigenvalues of the adjacency matrix
are satisfied by a large graph if and only if one of them is.
In particular, if the edge density of a large graph $G$ is $1/2+o(1)$ and
the density of cycles of length four is $1/16+o(1)$,
then the density of all small subgraphs
is close to their expected density in the Erd\H os-R\'enyi random graph with edge density $1/2$.
In other words, the quasirandomness of graphs is captured by densities of two (small) subgraphs.
Results of a similar kind have been obtained for many other types of combinatorial objects,
for example 
groups~\cite{Gow08},
hypergraphs~\cite{ChuG90,Gow06,Gow07,HavT89,KohRS02},
permutations~\cite{Coo04,CooP08,ChaKNPSV20,KraP13},
which also appear in disguise in statistics~\cite{BerD14,EveL21,Hoe48,Yan70},
set systems~\cite{ChuG91s},
subsets of integers~\cite{ChuG92} and
tournaments~\cite{BucLSS21,ChuG91,CorR17,HanKKMPSV19}.
In this paper, we prove a conjecture posed by Garbe, Hancock, Hladk\'y and Sharifzadeh~\cite[Conjecture 12.3]{GarHHS20} and
establish that the same phenomenon holds for Latin squares.
As Garbe et al.\ noted in their paper,
while the setting of Latin squares may initially seem analogous to some of those considered above,
we are in fact faced with the additional challenge of simultaneously managing local and global information,
represented by the relative order of the entries of a Latin square and their global position within the square.
We overcome this difficulty by using their work on the limit theory of Latin squares,
which allows us to approach a compound structure of Latin squares in a compact way.

To state our result precisely, we need to fix some notation.
We use $[n]$ to denote the set $\{1,\ldots,n\}$. 
A \emph{Latin square} of order $n$ is an $n\times n$ matrix such that
each row and each column contains each of the numbers $1,\ldots,n$ exactly once.
A $k\times\ell$ \emph{pattern} is a $k\times \ell$ matrix that
contains each of the numbers $1,\ldots,k\ell$ exactly once.
The \emph{density} of a $k\times\ell$ pattern $A$ in a Latin square $L$ of order $n$,
which we denote by $t(A,L)$,
is the probability that
a uniformly chosen random $k$-tuple $i_1<\cdots<i_k$ of indices between $1$ and $n$ and
a uniformly chosen random $\ell$-tuple $j_1<\cdots<j_\ell$ of indices between $1$ and $n$ satisfy 
$L_{i_aj_b}<L_{i_{a'}j_{b'}}$ if and only if $A_{ab}<A_{a'b'}$ for all $a,a'\in [k]$ and $b,b'\in [\ell]$;
if $k>n$ or $\ell>n$, then we set $t(A,L)$ to be zero.
We say that a sequence $(L_n)_{n\in\NN}$ of Latin squares is \emph{quasirandom}
if it holds for all $k,\ell\in\NN$ that
\begin{equation}
\lim_{n\to\infty} t(A,L_n)=\frac{1}{(k\ell)!}\label{eq:tkl}
\end{equation}
for every $k\times\ell$ pattern $A$.
Our main result (Theorem~\ref{thm:main}) yields that
a sequence $(L_n)_{n\in\NN}$ of Latin squares with orders tending to infinity is quasirandom if and only if
the equality \eqref{eq:tkl} holds for all $2\times 3$ patterns $A$, i.e., when $k=2$ and $\ell=3$,
see Corollary~\ref{cor:main}.

We complement this result by showing that the assumption on the size of the patterns cannot be weakened
such that the equality in \eqref{eq:tkl} holds for all $2\times 2$ patterns $A$, nor that 
\eqref{eq:tkl} holds for all $1\times n$ patterns,
i.e., our result is the best possible in the sense that
$k=2$ and $\ell=3$ are the minimal values of $k$ and $\ell$ such that
the equality in \eqref{eq:tkl} for all $k\times\ell$ patterns imply the quasirandomness.
In particular,
Proposition~\ref{prop:lower} together with the results of Garbe et al.~\cite{GarHHS20}
yields the existence of a sequence $(L_n)_{n\in\NN}$ of Latin squares that is not quasirandom but
\[\lim_{n\to\infty} t(A,L_n)=\frac{1}{\ell!}\]
for every $1\times \ell$ pattern $A$ and 
\[\lim_{n\to\infty} t(A,L_n)=\frac{1}{24}\]
for every $2\times 2$ pattern $A$.

\section{Notation}
\label{sec:notation}

In this section, we fix the notation used throughout the paper, in particular,
the notation concerning combinatorial limits.
We start with some general notation.
The set of positive integers between $1$ and $n$ is denoted by $[n]$.
We often work with the uniform Borel measure on $\RR^n$, which is denoted by $\lambda_n$ or
simply by $\lambda$ when $n$ is clear from the context.

We next define the limit object for Latin squares;
the definitions and results follow the exposition in~\cite{GarHHS20},
where we refer the reader for further details.
In what follows, $\Omega$ can be any separable atomless probability space, however,
we fix $\Omega$ to be $[0,1]$ with the uniform measure for simplicity of our exposition.
Since the order of the points of $[0,1]$ is not important,
we will write $\Omega$ instead of $[0,1]$ to emphasize this fact in what follows.
A \emph{Latinon} is a pair $(W,f)$ such that
$W:\Omega^2\to\BB[0,1]$ is a measurable function,
where $\BB[0,1]$ is the set of Borel probability measures on $[0,1]$ equipped with a natural topology,
$f:\Omega\to [0,1]$ is a measure preserving function, and
\begin{equation}
\int_{\Omega}W(x,y)(S)\dd y=\int_{\Omega}W(y,x)(S)\dd y=\lambda(S)\label{eq:latin}
\end{equation}
for almost all $x\in\Omega$ and all Borel subsets $S\subseteq [0,1]$.
Intuitively speaking, each point $(x,y)\in\Omega^2$ represents an entry,
$W(x,y)$ is the value of the entry, and
$f(x)$ and $f(y)$ determine the position of the row and the column of the entry.

The \emph{density} of a $k\times\ell$ pattern $A$ in a Latinon $(W,f)$
is the probability that $z_{ab}<z_{a'b'}$ if and only if $A_{ab}<A_{a'b'}$ for all $a,a'\in [k]$ and $b,b'\in [\ell]$
where the points $z_{ab}$, $a\in [k]$ and $b\in [\ell]$, are obtained as follows:
sample $k$ and $\ell$ points from $\Omega$,
denote them in a way that $f(x_1)<\cdots<f(x_k)$ and $f(y_1)<\cdots<f(y_\ell)$ (note that
the $f$-images of the points are different with probability one), and
sample a point $z_{ab}$ from $W(x_a,y_b)$ for every $a\in [k]$ and $b\in [\ell]$.
The density of a $k\times\ell$ pattern $A$ in a Latinon $(W,f)$ is denoted by $t(A,W,f)$.
We say that a Latinon $(W,f)$ is \emph{quasirandom}
if $t(A,W,f)=\frac{1}{(k\ell)!}$ for every $k\times\ell$ pattern $A$.

We say that a sequence of Latin squares $(L_n)_{n\in\NN}$ is \emph{convergent}
if the orders of the Latin squares $L_n$ tend to infinity and
the sequence $(t(A,L_n))_{n\in\NN}$ is convergent for every $k\times\ell$ pattern $A$.
A Latinon $(W,f)$ is a \emph{limit} of a convergent sequence of Latin squares $(L_n)_{n\in\NN}$ if
\[\lim_{n\to\infty}t(A,L_n)=t(A,W,f)\]
for every $k\times\ell$ pattern $A$.
The results of Garbe et al.~\cite{GarHHS20} imply that
every convergent sequence of Latin squares has a limit and
every Latinon is a limit of a convergent sequence of Latin squares.
Note that a sequence of Latin squares $(L_n)_{n\in\NN}$ is quasirandom in the sense defined in Section~\ref{sec:intro}
if it is convergent and its limit is a quasirandom Latinon.

\section{Main result}
\label{sec:main}

In this section, we present our main result.
We start with describing a correspondence involving Latinons, which will be useful in the proof.
If $(W,f)$ is a Latinon,
we can define a (measurable) function $\Wf:\Omega^2\times [0,1]\to [0,1]$ as $\Wf(x,y,z)=W(x,y)([0,z])$.
Observe that $\Wf(x,y,z)$ is a non-decreasing function of $z$ when $(x,y)\in\Omega^2$ are fixed and
it holds that
\begin{equation}
\int_{\Omega}\Wf(x,y,z)\dd y=\int_{\Omega}\Wf(y,x,z)\dd y=z\label{eq:Wf}
\end{equation}
for almost all $x\in\Omega$ and all $z\in[0,1]$.
In the other direction,
if $f:\Omega\to [0,1]$ is a measure preserving function and
$\Wf:\Omega^2\times [0,1]\to [0,1]$ is a measurable function such that
\begin{itemize}
\item $\Wf(x,y,z)$ is a non-decreasing function of $z$ when $(x,y)\in\Omega^2$ are fixed,
\item $\Wf(x,y,0)=0$ and $\Wf(x,y,1)=1$ for every $(x,y)\in\Omega^2$, and
\item the equality \eqref{eq:Wf} holds for all $x\in\Omega$ and all $z\in [0,1]$,
\end{itemize}
then $(W,f)$ is a Latinon
where $W:\Omega^2\to\BB[0,1]$ is the unique function satisfying that
$W(x,y)([0,z])=\Wf(x,y,z)$ for all $(x,y)\in\Omega^2$ and $z\in [0,1]$.
Hence, we can identify Latinons $(W,f)$ with pairs $(\Wf,f)$.

In the proof of Theorem~\ref{thm:main},
we will need the following auxiliary lemma,
which is implicit in~\cite[proof of Lemma 2.7 or Lemma 3.3]{LovS11} and
which can be found explicitly stated as~\cite[Lemma 8]{CooKKN18}.

\begin{lemma}
\label{lm:square}
Let $F:\Omega^2\to [0,1]$ be a measurable function such that
\[\int_\Omega F(x,z)F(x',z) \dd z = \xi\]
for almost every $(x,x')\in\Omega^2$.
Then, it holds that
$$\int_\Omega F(x,z)^2 \dd z = \xi$$
for almost every $x\in\Omega$.
\end{lemma}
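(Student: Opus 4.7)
The plan is to show that, up to null sets, $F(x,z)$ does not depend on its first argument, after which the conclusion is immediate. Set $h(z):=\int_\Omega F(x,z)\,\dd x$; since $F$ is bounded and $\lambda(\Omega)=1$, we have $h\in L^2(\Omega)$. Integrating the hypothesis over both $x$ and $x'$ via Fubini gives $\int_\Omega h(z)^2\,\dd z=\xi$, and integrating it in $x'$ alone yields $\int_\Omega F(x,z)\,h(z)\,\dd z=\xi$ for almost every $x\in\Omega$.

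Define $R(x,z):=F(x,z)-h(z)$. Expanding $R(x,z)R(x',z)$ and invoking the three identities above,
\[\int_\Omega R(x,z)R(x',z)\,\dd z=\xi-\xi-\xi+\xi=0\]
for almost every $(x,x')\in\Omega^2$.

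The crucial step is then to upgrade this a.e.\ pairwise orthogonality of the family $\{R(x,\cdot)\}_{x\in\Omega}$ to the vanishing of $R$ itself. I would view $R\in L^2(\Omega^2)$ as the kernel of the Hilbert--Schmidt operator $T:L^2(\Omega)\to L^2(\Omega)$ with $(Tg)(x)=\int_\Omega R(x,z)g(z)\,\dd z$. The display above identifies the kernel of $TT^*$ with the zero function, so $TT^*=0$; but $\langle TT^*g,g\rangle=\|T^*g\|^2$ for all $g\in L^2(\Omega)$, whence $T^*=0$, hence $T=0$, and consequently $R=0$ as an element of $L^2(\Omega^2)$.

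By Fubini, for almost every $x\in\Omega$ one has $F(x,z)=h(z)$ for almost every $z$, and therefore $\int_\Omega F(x,z)^2\,\dd z=\int_\Omega h(z)^2\,\dd z=\xi$ for almost every $x$, as required. The only genuine obstacle is the passage from a.e.\ orthogonality to $R\equiv 0$; the Hilbert--Schmidt viewpoint handles this cleanly in the separable setting and bypasses any need to make pointwise sense of a ``diagonal'' condition in an almost-everywhere statement.
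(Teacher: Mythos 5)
Your argument is correct, and it is self-contained, which is more than the paper offers: the paper does not prove Lemma~\ref{lm:square} at all, but instead cites it as implicit in Lov\'asz--Szegedy and stated explicitly in~\cite{CooKKN18}. Your decomposition $F(x,z)=h(z)+R(x,z)$ with $h(z)=\int_\Omega F(x,z)\,\dd x$, the three Fubini-derived identities $\int h^2=\xi$, $\int F(x,\cdot)h=\xi$ a.e.\ $x$, and the original hypothesis, correctly yield $\int_\Omega R(x,z)R(x',z)\,\dd z=0$ for a.e.\ $(x,x')$. You also correctly identify the crux --- that one cannot simply ``set $x'=x$'' in an a.e.\ statement --- and your resolution via the Hilbert--Schmidt operator $T$ with kernel $R$ is clean: the kernel of $TT^*$ is the a.e.-zero bilinear form, so $TT^*=0$, so $\|T^*g\|^2=\langle TT^*g,g\rangle=0$ for all $g$, giving $T=0$ and hence $R=0$ in $L^2(\Omega^2)$, using that $R\mapsto T_R$ is an isometry onto the Hilbert--Schmidt class in the separable setting. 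One could reach the same conclusion slightly more elementarily by integrating the vanishing kernel against $g(x)g(x')$ for $g$ in a countable dense subset of $L^2(\Omega)$ --- this yields $\int_\Omega\bigl(\int_\Omega R(x,z)g(x)\,\dd x\bigr)^2\dd z=0$, hence $R(\cdot,z)\perp g$ for a.e.\ $z$ and all such $g$, hence $R=0$ --- but that is essentially the same computation unwound, and your operator-theoretic phrasing is both correct and concise.
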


We are now ready to prove our main result.

\begin{theorem}
\label{thm:main}
Let $(W,f)$ be a Latinon.
If
\[t(A,W,f)=\frac{1}{720}\]
for every $2\times 3$ pattern $A$,
then the Latinon $(W,f)$ is quasirandom.
\end{theorem}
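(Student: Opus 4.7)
My plan is to show directly that the $2\times 3$ hypothesis forces $\Wf(x,y,z)=z$ for almost every $(x,y,z)\in\Omega^2\times[0,1]$; since this means $W(x,y)=\lambda$ almost everywhere, every pattern density then equals $1/(k\ell)!$ and the Latinon is quasirandom. I introduce the 2-point kernel
\[V(x_1,x_2,z):=\int_{\Omega}\Wf(x_1,y,z)\,\Wf(x_2,y,z)\,\dd y,\]
and aim to prove that $V(x_1,x_2,z)=z^2$ almost everywhere. The Latinon identity $\int_\Omega\Wf(x,y,z)\,\dd x=z$ combined with Fubini immediately gives $\int_{\Omega^2}V(x_1,x_2,z)\,\dd x_1\dd x_2=z^2$ for almost every $z\in[0,1]$.

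The central step computes $\int V^2$ in two ways using the following event $E$: in a uniform $2\times 3$ sample, the value at cell $(1,3)$ exceeds each of the values at cells $(1,1),(1,2),(2,1),(2,2)$. A direct count shows that exactly $144$ of the $720$ patterns satisfy $E$ (either $A_{13}=5$ forcing $A_{23}=6$ with $4!=24$ arrangements of the four smaller ranks, or $A_{13}=6$ with any of the five remaining ranks placed freely at $(2,3)$ and $4!$ arrangements of the rest, giving $5\cdot 24=120$), so the $2\times 3$ hypothesis yields $\Pr[E]=144/720=1/5$. Computing $\Pr[E]$ by conditioning on the sampled positions and values $v_{ab}\sim W(x_a,y_b)$, with $v_{13}$ playing the role of the threshold, the Latinon identity $\int_\Omega W(x_1,y_3)\,\dd y_3=\lambda$ converts the $y_3$-integration against $\dd W(x_1,y_3)(z)$ into an integration against Lebesgue measure $\dd z$; the remaining $y_1$- and $y_2$-integrals then factor into two copies of $V(x_1,x_2,z)$. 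Therefore
\[\Pr[E]=\int_{\Omega^2\times[0,1]}V(x_1,x_2,z)^2\,\dd x_1\,\dd x_2\,\dd z=\frac{1}{5}.\]

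Cauchy-Schwarz applied to the $(x_1,x_2)$-integral with the constraint $\int_{\Omega^2}V\,\dd x_1\dd x_2=z^2$ gives $\int_{\Omega^2}V^2\,\dd x_1\dd x_2\geq z^4$, and integrating over $z$ yields $\int V^2\geq 1/5$, with equality if and only if $V(x_1,x_2,z)=z^2$ for almost every $(x_1,x_2,z)$. Since equality holds, $V\equiv z^2$ almost everywhere. For almost every $z$, I then apply Lemma~\ref{lm:square} to $F(x,y):=\Wf(x,y,z)$, whose hypothesis $\int_\Omega F(x_1,y)F(x_2,y)\,\dd y=V(x_1,x_2,z)=z^2$ holds for a.e.\ $(x_1,x_2)$; the conclusion is $\int_\Omega\Wf(x,y,z)^2\,\dd y=z^2$ for a.e.\ $x$. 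Combining with the Latinon identity $\int_\Omega\Wf(x,y,z)\,\dd y=z$, the equality case of Cauchy-Schwarz forces $\Wf(x,\cdot,z)$ to be a.e.\ constant in $y$, necessarily equal to $z$. Hence $\Wf(x,y,z)=z$ almost everywhere, as required.

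The most delicate point is the identification $\Pr[E]=\int V^2$: the density $t(A,W,f)$ is defined using positions sampled subject to $f$-ordering constraints, while the integral on the right is naturally taken over all of $\Omega^5$. Under the $2\times 3$ hypothesis, however, the joint rank permutation of the six sampled values is uniform on $S_6$ even under i.i.d.\ sampling of positions, because the sort permutation depends only on the positions' $f$-values and is thus independent of the value-ranks, which are themselves uniform by assumption. This reformulation is what allows $\Pr[E]$ to be computed under unconstrained i.i.d.\ sampling and enables the clean Latinon-based integration of $y_3$.
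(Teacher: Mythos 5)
Your proof is correct and follows essentially the same approach as the paper: the kernel $V$ you introduce is just a reparametrization of the four-fold integral the paper works with directly, your $\Pr[E]=1/5$ computation via the uniform-rank observation is a compact reformulation of the paper's $\sum_A \alpha_A t(A,W,f)$ decomposition (with $\alpha_A\in\{1/3,1/6\}$), and the Cauchy--Schwarz plus Lemma~\ref{lm:square} closing steps are identical up to swapping the roles of the row and column variables.
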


\begin{proof}
Let $\Wf$ be the function associated with $(W,f)$ as defined at the beginning of this section.
We will show for every $z\in [0,1]$ that $\Wf(x,y,z)=z$ for almost all $(x,y)\in\Omega^2$,
which will imply that $(W,f)$ is quasirandom.

Since $(W,f)$ is a Latinon, it holds that
\begin{equation}
\int_{\Omega} \Wf(x,y,z)\dd y=z\label{eq:1}
\end{equation}
for almost all $x\in\Omega$ and every $z\in [0,1]$.
It follows that
\begin{equation}
\int_{\Omega^3} \Wf(x,y_1,z)\Wf(x,y_2,z)\dd x\dd y_1\dd y_2=\int_{\Omega}\left(\int_{\Omega}\Wf(x,y,z)\dd y\right)^2\dd x=z^2,\label{eq:2}
\end{equation}
which yields using the Cauchy-Schwarz Inequality that
\begin{align}
z^4 & = \left(\int_{\Omega^3} \Wf(x,y_1,z)\Wf(x,y_2,z)\dd x\dd y_1\dd y_2\right)^2 \nonumber\\
    & =\left(\int_{\Omega^2}\left(\int_{\Omega} \Wf(x,y_1,z)\Wf(x,y_2,z)\dd x\right)\dd y_1\dd y_2\right)^2 \nonumber\\
    & \le \int_{\Omega^2}\left(\int_{\Omega} \Wf(x,y_1,z)\Wf(x,y_2,z)\dd x\right)^2\dd y_1\dd y_2 \nonumber\\
    & = \int_{\Omega^4}\Wf(x_1,y_1,z)\Wf(x_1,y_2,z)\Wf(x_2,y_1,z)\Wf(x_2,y_2,z)\dd x_1\dd x_2\dd y_1\dd y_2\label{eq:3}
\end{align}
for every $z\in [0,1]$.
In particular, it holds that
{\iftinymath\small\fi
\begin{align}
\frac{1}{5}& =\int_{[0,1]}z^4\dd z\nonumber\\
           & \le\int_{\Omega^4\times [0,1]}\Wf(x_1,y_1,z)\Wf(x_1,y_2,z)\Wf(x_2,y_1,z)\Wf(x_2,y_2,z)\dd x_1\dd x_2\dd y_1\dd y_2\dd z.\label{eq:4}
\end{align}
}

We next show that the assumption of the theorem implies that
{\iftinymath\small\fi
\begin{equation}
\int_{\Omega^4\times [0,1]}\Wf(x_1,y_1,z)\Wf(x_1,y_2,z)\Wf(x_2,y_1,z)\Wf(x_2,y_2,z)\dd x_1\dd x_2\dd y_1\dd y_2\dd z=\frac{1}{5}.\label{eq:5}
\end{equation}
}The integral in \eqref{eq:5} is the probability that
if points $x_1$, $x_2$, $y_1$ and $y_2$ are chosen randomly from $\Omega$,
numbers $z_{ab}$ randomly from $W(x_a,y_b)$ for $a,b\in [2]$, and a number $z$ randomly uniformly from $[0,1]$,
then $z_{ab}\le z$ for all $a,b\in [2]$.
Observe that when $x_1$, $x_2$, $y_1$ and $y_2$ are fixed,
if we choose $y_3$ randomly from $\Omega$ and $z_{13}$ randomly from $W(x_1,y_3)$,
then the number $z_{13}$ is randomly uniformly chosen from $[0,1]$ by \eqref{eq:latin}.
Hence, the integral in \eqref{eq:5} is the probability that
if points $x_1$, $x_2$, $y_1$, $y_2$ and $y_3$ are chosen randomly from $\Omega$,
numbers $z_{ab}$ randomly from $W(x_a,y_b)$ for $a\in [2]$ and $b\in [3]$,
then $z_{ab}\le z_{13}$ for all $a,b\in [2]$.
This probability is equal to the sum of the terms $\alpha_A t(A,W,f)$ over all $2\times 3$ patterns $A$,
where $\alpha_A=1/3$ if the pattern $A$ contains $5$ and $6$ in the same column and $\alpha_A=1/6$ otherwise.
Since the number of $2\times 3$ patterns containing $5$ and $6$ in the same column is $144$,
we obtain that the integral in \eqref{eq:5} is equal to
\[144\times\frac{1}{3}\times\frac{1}{720}+576\times\frac{1}{6}\times\frac{1}{720}=\frac{1}{5}.\]
Hence, the identity \eqref{eq:5} is established.

The identity \eqref{eq:5} implies that equality holds in \eqref{eq:4}, which implies using \eqref{eq:3} that
{\iftinymath\small\fi
\begin{equation}
\int_{\Omega^4}\Wf(x_1,y_1,z)\Wf(x_1,y_2,z)\Wf(x_2,y_1,z)\Wf(x_2,y_2,z)\dd x_1\dd x_2\dd y_1\dd y_2=z^4\label{eq:6}
\end{equation}
}for almost every $z\in [0,1]$ and so the equality holds in \eqref{eq:3} for almost every $z\in [0,1]$.
Since the left side in \eqref{eq:6} is a non-decreasing function of $z$,
it follows that the identity \eqref{eq:6} holds for every $z\in [0,1]$.
In particular, the equality holds in \eqref{eq:3} for every $z\in [0,1]$.
However, the equality in \eqref{eq:3} can hold only if the function $F_z:\Omega^2\to [0,1]$ defined as
\[F_z(y_1,y_2)=\int_{\Omega} \Wf(x,y_1,z)\Wf(x,y_2,z)\dd x\]
is constant almost everywhere.
Hence, we obtain for every $z\in [0,1]$ that
$F_z(y_1,y_2)=z^2$ for almost every $(y_1,y_2)\in\Omega^2$.
Since the probability spaces $\Omega$ and $\Omega^2$ are isomorphic,
we obtain using Lemma~\ref{lm:square} that $F_z(y,y)=z^2$ for almost every $y\in\Omega$.
It follows that
\begin{equation}
\int_{\Omega^2} \Wf(x,y,z)^2\dd x\dd y=\int_{\Omega}F_z(y,y)\dd y=z^2\label{eq:7}
\end{equation}
for every $z\in [0,1]$.
On the other hand, the equality \eqref{eq:1} yields that
\begin{equation}
\int_{\Omega^2} \Wf(x,y,z)\dd x\dd y=z\label{eq:8}
\end{equation}
for every $z\in [0,1]$.
It follows then, by \eqref{eq:7} and \eqref{eq:8}, that
\[\left(\int_{\Omega^2} \Wf(x,y,z)\dd x\dd y\right)^2=\int_{\Omega^2} \Wf(x,y,z)^2\dd x\dd y,\]
and hence we obtain for every $z\in [0,1]$ that $\Wf(x,y,z)=z$ for almost all $(x,y)\in\Omega^2$.
Consequently, the following holds for almost every $(x,y)\in\Omega^2$: $\Wf(x,y,z)=z$ for every rational $z\in [0,1]$;
here, we use that the set of rationals is countable.
Since the function $\Wf(x,y,z)$ is a non-decreasing function of $z$ when $(x,y)\in\Omega^2$ is fixed,
we obtain for almost every $(x,y)\in\Omega^2$ that $\Wf(x,y,z)=z$ for every $z\in [0,1]$.
In particular, it holds that $W(x,y)$ is the uniform measure on $[0,1]$ for almost every $(x,y)\in\Omega^2$.
This implies that $t(A,W,f)=1/(k\ell)!$ for every $k\times\ell$ pattern $A$ and
we conclude that the Latinon $(W,f)$ is quasirandom.
\end{proof}

We obtain the following corollary.

\begin{corollary}
\label{cor:main}
Let $(L_n)_{n\in\NN}$ be a sequence of Latin squares with their orders tending to infinity such that
\[\lim_{n\to\infty}t(A,L_n)=\frac{1}{720}\]
for every $2\times 3$ pattern $A$.
Then it holds that
\[\lim_{n\to\infty}t(A,L_n)=\frac{1}{(k\ell)!}\]
for every $k\times\ell$ pattern $A$, i.e., $(L_n)_{n\in\NN}$ is quasirandom.
\end{corollary}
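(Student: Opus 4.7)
The plan is to deduce the corollary from Theorem~\ref{thm:main} by a standard compactness argument based on the limit theory of Latin squares developed by Garbe et al.~\cite{GarHHS20}, which is recalled in Section~\ref{sec:notation}.

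First I would argue that it suffices to show, for every fixed $k\times\ell$ pattern $B$, that the sequence $(t(B,L_n))_{n\in\NN}$ converges to $1/(k\ell)!$. Suppose for contradiction that there is a pattern $B$ and a subsequence of $(L_n)_{n\in\NN}$ along which $t(B,L_n)$ stays bounded away from $1/(k\ell)!$. Using a standard diagonal extraction over the (countable) set of all patterns, I can refine this subsequence to a convergent sequence of Latin squares in the sense of Section~\ref{sec:notation}; call the refined sequence $(L'_n)_{n\in\NN}$. By the results of Garbe et al.~\cite{GarHHS20} quoted in the excerpt, this convergent sequence has a Latinon limit $(W,f)$.

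Next I would transfer the hypothesis to the limit. Since $t(A,L'_n)\to t(A,W,f)$ for every pattern $A$, the assumption $\lim_{n\to\infty}t(A,L_n)=1/720$ for every $2\times 3$ pattern $A$ gives
\[t(A,W,f)=\frac{1}{720}\]
for every $2\times 3$ pattern $A$. Theorem~\ref{thm:main} then applies and yields that the Latinon $(W,f)$ is quasirandom, that is, $t(A,W,f)=1/(k\ell)!$ for every $k\times\ell$ pattern $A$. In particular $t(B,L'_n)\to 1/(k\ell)!$, which contradicts the choice of the subsequence. Hence no such pattern $B$ exists, and $(L_n)_{n\in\NN}$ is quasirandom.

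There is essentially no hard step: the only point requiring a little care is the compactness/diagonal extraction that passes from the assumed convergence of $2\times 3$ densities to the existence of a Latinon limit whose $2\times 3$ densities are all $1/720$. Once that is in place, Theorem~\ref{thm:main} does all the work, and the contradiction closes the argument.
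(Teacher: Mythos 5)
Your proposal is correct and follows essentially the same approach as the paper: pass to a subsequence along which the offending density stays bounded away from the quasirandom value, extract a convergent subsequence (the paper cites the compactness result of Garbe et al.\ where you invoke diagonal extraction directly), take the Latinon limit, apply Theorem~\ref{thm:main}, and derive a contradiction.
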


\begin{proof}
Fix a sequence $(L_n)_{n\in\NN}$ of Latin squares satisfying the assumption of the corollary, and
suppose that
the limit of $t(A_0,L_n)$ does not exist or is different from $\frac{1}{(k_0\ell_0)!}$ for a $k_0\times\ell_0$ pattern $A_0$.
By considering a subsequence,
we may in fact assume that the limit of $t(A_0,L_n)$ exists and is different from $\frac{1}{(k_0\ell_0)!}$.
Since every sequence of Latin squares has a convergent subsequence~\cite{GarHHS20},
we obtain a convergent sequence $(L'_n)_{n\in\NN}$ of Latin squares with their orders tending to infinity such that
\[\lim_{n\to\infty}t(A,L'_n)=\frac{1}{720}\]
for every $2\times 3$ pattern $A$ and
\[\lim_{n\to\infty}t(A_0,L'_n)\not=\frac{1}{(k_0\ell_0)!}.\]
Let $(W,f)$ be the limit Latinon for the sequence $(L'_n)_{n\in\NN}$.
By Theorem~\ref{thm:main}, the Latinon $(W,f)$ is quasirandom,
which yields that
\[t(A_0,W,f)=\frac{1}{(k_0\ell_0)!}.\]
This contradicts that the limit of $t(A_0,L_n)$ does not exist or is different from $\frac{1}{(k_0\ell_0)!}$, and
we conclude that the sequence $(L_n)_{n\in\NN}$ is quasirandom.
\end{proof}

Finally,
the following corollary confirms the conjecture by Garbe et al.~\cite[Conjecture 13.2]{GarHHS20} as stated in their paper.
The proof of the corollary follows by inspecting the proof of Theorem~\ref{thm:main},
in particular, the conclusion on the values of $W$ at the very end of the proof.

\begin{corollary}
\label{cor:conj}
Let $(W,f)$ be a Latinon.
If $t(A,W,f)=1/720$ for every $2\times 3$ pattern $A$,
then $W(x,y)$ is the uniform measure on $[0,1]$ for almost every $(x,y)\in\Omega^2$.
\end{corollary}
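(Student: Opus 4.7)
The plan is to piggyback on the proof of Theorem~\ref{thm:main} rather than run a new argument. That proof does not merely conclude that $(W,f)$ is quasirandom: in its penultimate step it shows, after applying the Cauchy--Schwarz Inequality, Lemma~\ref{lm:square}, and a countability/monotonicity upgrade, that for almost every $(x,y)\in\Omega^2$ one has $\Wf(x,y,z)=z$ for \emph{every} $z\in[0,1]$ simultaneously.

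Recalling the definition $\Wf(x,y,z)=W(x,y)([0,z])$, this pointwise statement says that for almost every $(x,y)\in\Omega^2$ the Borel probability measure $W(x,y)$ on $[0,1]$ has cumulative distribution function equal to the identity, which is precisely the characterization of the uniform (Lebesgue) probability measure on $[0,1]$. So $W(x,y)$ is the uniform measure for almost every $(x,y)\in\Omega^2$, as claimed.

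There is no genuine obstacle, because all the analytic work has already been carried out inside the proof of Theorem~\ref{thm:main}. The only step is to observe that Theorem~\ref{thm:main} in fact proves more than its stated conclusion, and to translate the pointwise identity $\Wf(x,y,\cdot)=\mathrm{id}_{[0,1]}$ back into the language of measures on $[0,1]$. Accordingly, the proof should simply point to the relevant lines at the end of the proof of Theorem~\ref{thm:main} and make this translation explicit.
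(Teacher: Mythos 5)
Your proposal is correct and matches the paper's own argument exactly: the paper also derives Corollary~\ref{cor:conj} simply by pointing to the end of the proof of Theorem~\ref{thm:main}, where it is explicitly concluded that $\Wf(x,y,z)=z$ for every $z\in[0,1]$ for almost every $(x,y)\in\Omega^2$, i.e., $W(x,y)$ is the uniform measure almost everywhere. Nothing further is needed.
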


\section{Lower bound}
\label{sec:lower}

In this section, we establish that the assumption in Theorem~\ref{thm:main} involving $2\times 3$ patterns
cannot be replaced with $1\times n$ patterns or $2\times 2$ patterns.

\begin{proposition}
\label{prop:lower}
There exists a Latinon $(W,f)$ that is not quasirandom and
that satisfies for all $n\in\NN$ that
\[t(A,W,f)=\frac{1}{n!}\]
for every $1\times n$ pattern $A$ and
\[t(A,W,f)=\frac{1}{24}\]
for every $2\times 2$ pattern $A$.
\end{proposition}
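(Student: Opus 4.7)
The plan is to construct an explicit non-quasirandom Latinon $(W,f)$ satisfying the two required pattern-density conditions. Non-quasirandomness will follow immediately from Corollary~\ref{cor:conj} once $W$ is chosen so that $W(x,y)\neq\lambda$ on a positive-measure set; the interesting content is to enforce the $1\times n$ and $2\times 2$ density conditions, and for this I will exploit a product structure on $\Omega$ that decouples the sorting coordinate from the value-generating coordinate.

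Concretely, take $\Omega=[0,1]^2$ with product Lebesgue measure, set $f(x_1,x_2)=x_1$ (the measure-preserving projection to the first coordinate), and let $W((x_1,x_2),(y_1,y_2))$ depend only on the second coordinates $(x_2,y_2)$, chosen to be a finitely-supported (hence non-uniform) probability measure on $[0,1]$. The verification then proceeds in four steps: \textbf{(i)} check the Latinon marginal conditions, which reduce to simple marginal conditions on $W$ viewed as a function of $(x_2,y_2)$; \textbf{(ii)} for every $1\times n$ pattern $A$, observe that the $y^{(i)}_1$-coordinates used for sorting columns are independent of the $y^{(i)}_2$-coordinates on which $W$ depends, so the sampled values $z^{(i)}$ end up iid uniform on $[0,1]$ conditional on the row $x$ and their order is therefore uniform over $S_n$, giving $t(A,W,f)=1/n!$; \textbf{(iii)} for every $2\times 2$ pattern $A$, verify that the four sampled values $(z_{ab})_{a,b\in\{1,2\}}$ have rank distribution uniform over $S_4$, giving $t(A,W,f)=1/24$; \textbf{(iv)} conclude non-quasirandomness via Corollary~\ref{cor:conj}.

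The main obstacle is Step~\textbf{(iii)}. The naive choice $W=\delta_{x_2+y_2\bmod 1}$ satisfies \textbf{(i)} and \textbf{(ii)} but fails \textbf{(iii)}: the resulting values $z_{ab}=X_a+Y_b\bmod 1$ (with $X_a=x^{(a)}_2$ and $Y_b=y^{(b)}_2$) obey the additive identity $z_{11}+z_{22}\equiv z_{12}+z_{21}\pmod{1}$, constraining the four points on the circle $\RR/\ZZ$ to lie in a ``parallelogram,'' so only $16$ of the $24$ possible linear orderings occur with positive probability. To repair this, one must enrich $W$ with additional randomness that breaks the additive relation while preserving the $1\times n$ property; a natural candidate is the $\pm$-symmetrization in which $W((x_1,x_2),(y_1,y_2))$ is the uniform measure on the four-element set $\{\sigma x_2+\tau y_2\bmod 1 : \sigma,\tau\in\{\pm 1\}\}$. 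The independent cell-wise $\pm$ signs for the four cells, combined with shift invariance and the row/column-swap Klein-$4$ action, should yield all $24$ orderings with equal probability $1/24$. Verifying this full $S_4$-symmetry of the induced rank distribution---identifying a group of symmetries acting transitively on the $24$ orderings---is the technical heart of the proof.
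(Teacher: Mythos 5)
Your high-level strategy---decouple the coordinate used for sorting from the coordinate that drives the values of $W$---is a sound one and is close in spirit to what the paper actually does (the paper achieves the same decoupling by taking $\Omega=[0,1]$ and an $f$ that interleaves the two halves of $[0,1]$, which is equivalent to your $\Omega=[0,1]\times\{0,1\}$ picture). Steps~(i), (ii) and~(iv) of your outline are fine for your construction. But the proof has a genuine gap at Step~(iii), and you acknowledge as much: you never verify that the four sampled values have a uniform rank distribution, and the symmetry argument you sketch cannot do the job. Shift-invariance of the joint law of $(z_{11},z_{12},z_{21},z_{22})$ fails here: writing $z_{ab}=\sigma_{ab}X_a+\tau_{ab}Y_b \bmod 1$, translating all four $z_{ab}$ by $\delta$ cannot be absorbed into the $X_a,Y_b$, because a shift of $X_a$ moves $z_{ab}$ by $+\sigma_{ab}\delta$ or $-\sigma_{ab}\delta$ according to the random sign, so the four entries are not shifted coherently. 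Without shift-invariance you are left with the row/column-swap Klein-$4$ action (plus transpose and global negation if you like), and none of these subgroups of $S_4$ act transitively on the $24$ linear orders; they give at best two or three orbits of sizes $8$, so the orbit probabilities are constrained to be equal within each orbit but the orbits must still be compared by direct computation. In short, the "technical heart" is not merely a tedious calculation you are deferring---the method you propose for it does not close.

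The paper's construction sidesteps all of this by choosing a far simpler $W$: with $X=[0,1/2]^2\cup(1/2,1]^2$, set $W(x,y)$ to be the uniform measure on $[0,1/2]$ if $(x,y)\in X$ and on $[1/2,1]$ otherwise, while $f$ maps both halves of $\Omega$ onto $[0,1]$. In your language this is a two-point hidden coordinate with $W$ depending only on the XOR of the hidden bits. The $1\times n$ check is the same as yours, and the $2\times 2$ check reduces to a finite case analysis: the $2\times 2$ matrix recording which cells lie in $X$ versus its complement is equidistributed over eight possibilities, two of which (constant matrices) give probability $1/24$ and exactly one of the remaining six gives $1/4$ for any fixed pattern $A$, whence $t(A,W,f)=\tfrac18(\tfrac{2}{24}+\tfrac14)=\tfrac{1}{24}$. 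If you want to salvage your approach, it would be cleanest to replace the continuous $\pm$-symmetrized construction by this two-valued checkerboard---the decoupling idea you already have carries it, and the $S_4$-uniformity becomes an eight-case enumeration rather than a delicate symmetry argument.
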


\begin{proof}
Consider a Latinon $(W,f)$ defined as follows.
First, the function $f:\Omega\to [0,1]$ is defined as
\[f(x)=\begin{cases}
       2x & \mbox{if $x\le 1/2$, and}\\
       2x-1 & \mbox{otherwise.}
       \end{cases}\]
Clearly, the function $f$ is measure preserving.
The function $W:\Omega^2\to\BB[0,1]$ is defined as follows:
\[W(x,y)(S)=\begin{cases}
            2\lambda\left(S\cap [0,1/2]\right) & \mbox{if $(x,y)\in [0,1/2]^2\cup (1/2,1]^2$, and} \\
            2\lambda\left(S\cap [1/2,1]\right) & \mbox{otherwise,}
            \end{cases}\]
for every $(x,y)\in\Omega^2$ and every Borel subset $S\subseteq [0,1]$.	    

We first verify that $t(A,W,f)=\frac{1}{n!}$ for every $1\times n$ pattern $A$.
Fix $n\in\NN$.
Sample a point $x$ randomly from $\Omega$ and
$n$ additional points $y_i$ randomly from $\Omega$ and index them in such a way that $f(y_1)<\cdots<f(y_n)$.
The choice of the function $f$ implies that $y_i\in [0,1/2]$ with probability $1/2$ for every $i\in [n]$ (note that
this probability does not change when conditioned on that $f(y_i)$ is the $i$-th large $f$-image of the sampled points).
Next sample the point $z_i$ from $W(x,y_i)$ for $i\in [n]$.
If $(x,y_i)\in [0,1/2]^2\cup (1/2,1]^2$, then $z_i$ is a uniformly chosen point from $[0,1/2]$, and
it is a uniformly chosen point from $(1/2,1]$ otherwise.
It follows that $z_i$ is a uniformly chosen point from $[0,1]$ independent of the choices of other points,
i.e., $z_1,\ldots,z_n$ are $n$ independent uniformly random points from $[0,1]$.
It follows that $t(A,W,f)=1/n!$ for every $1\times n$ pattern $A$.

We next analyze the density of $2\times 2$ patterns.
Sample two pairs of points from $\Omega$,
index them in such a way that $f(x_1)<f(x_2)$ and $f(y_1)<f(y_2)$, and
then sample points $z_{ij}$ from $W(x_i,y_j)$ for $i,j\in [2]$.
Let $X=[0,1/2]^2\cup (1/2,1]^2$ and $Y=[0,1]^2\setminus X$.
Consider the $2\times 2$ matrix $B$ such that the entry in the $i$-th row and
the $j$-th column is $X$ if $(x_i,y_j)\in X$ and $Y$ otherwise.
An analysis analogous to that in the previous paragraph yields that
$B$ is equal to each of the following eight matrices with probability $1/8$:
\[\begin{pmatrix} X & X \\ X & X \end{pmatrix}
  \begin{pmatrix} Y & Y \\ Y & Y \end{pmatrix}
  \begin{pmatrix} X & X \\ Y & Y \end{pmatrix}
  \begin{pmatrix} Y & Y \\ X & X \end{pmatrix}
  \begin{pmatrix} X & Y \\ X & Y \end{pmatrix}
  \begin{pmatrix} Y & X \\ Y & X \end{pmatrix}
  \begin{pmatrix} X & Y \\ Y & X \end{pmatrix}
  \begin{pmatrix} Y & X \\ X & Y \end{pmatrix}\]
Fix a $2\times 2$ pattern $A$.
For each of the first two matrices above, the probability that $z_{ab}<z_{a'b'}$ if and only if $A_{ab}<A_{a'b'}$ for all $a,b,a',b'\in [2]$,
is equal to $1/24$ conditioned on $B$ being that matrix.
Among the remaining six matrices,
there is exactly one such that
the probability is equal to $1/4$ conditioned on $B$ being this matrix (it is the matrix obtained from $A$
by replacing the entries equal to $1$ and $2$ with $X$ and the entries equal to $3$ and $4$ with $Y$), and
the probability is equal to $0$ conditioned on $B$ being one of the remaining five matrices.
We conclude that
\[t(A,W,f)=\frac{1}{8}\left(\frac{2}{24}+\frac{1}{4}\right)=\frac{1}{24}\,.\]

Since Corollary~\ref{cor:conj} implies that the Latinon is not quasirandom,
the proof is now concluded.
For completeness,
we remark that for example the density of the pattern $2\times 3$ pattern
\[A=\begin{pmatrix} 1 & 2 & 3 \\ 4 & 5 & 6 \end{pmatrix}\]
in the Latinon $(W,f)$ is $\frac{11}{16\cdot 360}\not=\frac{1}{720}$.
\end{proof}

We present another example showing that the assumption in Theorem~\ref{thm:main} cannot be replaced solely with $2\times 2$ patterns.

\begin{proposition}
\label{prop:lower2}
Let $X=\left([0,1/4]\cup (3/4,1]\right)^2\cup (1/4,3/4]^2$ and $Y=[0,1]^2\setminus X$, and
define $W:\Omega^2\to\BB[0,1]$ as
\[W(x,y)(S)=\begin{cases}
            2\lambda\left(S\cap [0,1/2]\right) & \mbox{if $(x,y)\in X$, and} \\
            2\lambda\left(S\cap [1/2,1]\right) & \mbox{otherwise,}
            \end{cases}\]
for a Borel set $S\subseteq [0,1]$, and $f:\Omega\to [0,1]$ as the identity, i.e., $f(x)=x$.
The Latinon $(W,f)$ is not quasirandom
but it holds that $t(A,W,f)=1/24$ for every $2\times 2$ pattern $A$.
\end{proposition}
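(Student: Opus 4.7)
The plan is to verify that $(W,f)$ is a Latinon, deduce non-quasirandomness from Corollary~\ref{cor:conj}, and reduce the $2\times 2$ density computation to the analogous argument in the proof of Proposition~\ref{prop:lower} via one symmetry observation.

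First I would check~\eqref{eq:latin}: for any fixed $x\in\Omega$, the set of $y$ with $(x,y)\in X$ has measure exactly $1/2$---it is $[0,1/4]\cup(3/4,1]$ or $(1/4,3/4]$ depending on which of these two sets contains $x$---so the marginal of $W(x,\cdot)$ integrated against $y$ contributes equal halves from the uniform measures on $[0,1/2]$ and on $[1/2,1]$ and hence equals $\lambda$; the swapped identity follows from the symmetry of $X$ in its two coordinates. Non-quasirandomness is then immediate from Corollary~\ref{cor:conj}, since $W(x,y)$ is supported on a half-interval for every $(x,y)$ and therefore never equals the uniform measure on $[0,1]$.

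For the density of $2\times 2$ patterns, I would set $U=[0,1/4]\cup(3/4,1]$ and $V=(1/4,3/4]$, so that $X=U^2\cup V^2$ and both $U$ and $V$ have measure $1/2$. The key claim to establish is that when two points are sampled uniformly from $\Omega$ and relabelled as $x_1<x_2$, the resulting pair of $U/V$-types of $(x_1,x_2)$ is uniformly distributed on $\{U,V\}^2$. This is a short direct computation obtained by splitting $\Omega$ into the three intervals $[0,1/4]$, $(1/4,3/4]$, $(3/4,1]$ and summing over the six ordered cases; the balance relies on $U$ being placed symmetrically around $1/2$ with $V$ filling the complementary middle half.

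Once this claim is in place, the rest of the argument copies the end of the proof of Proposition~\ref{prop:lower} almost verbatim. The random matrix $B\in\{X,Y\}^{2\times 2}$ recording which set contains each $(x_i,y_j)$ depends only on the $U/V$-types of $x_1,x_2,y_1,y_2$, which by the previous step are independent fair coins; hence $B$ coincides with each of the same eight matrices listed there with probability $1/8$. Conditional on $B$, the entries $z_{ij}$ are independent uniforms on $[0,1/2]$ or $[1/2,1]$ according to $B_{ij}$, and the identical counting yields $t(A,W,f)=1/24$ for every $2\times 2$ pattern $A$. The main, and essentially only, obstacle relative to Proposition~\ref{prop:lower}---in which the map $f$ itself randomized the half of each sample---is the uniformity of $U/V$-types under order conditioning, which is precisely what the symmetric placement of $U$ around $1/2$ provides.
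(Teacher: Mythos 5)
Your proposal is correct and follows essentially the same route as the paper: set $P=[0,1/4]\cup(3/4,1]$ and $Q=(1/4,3/4]$ (your $U$ and $V$), observe that the $P/Q$-types of an ordered pair of uniform samples are themselves uniform on $\{P,Q\}^2$, conclude that the matrix $B$ is uniform over the same eight matrices as in Proposition~\ref{prop:lower}, and finish with the identical $\frac18\left(\frac{2}{24}+\frac14\right)=\frac1{24}$ count. The only additions---explicitly verifying~\eqref{eq:latin} and spelling out why Corollary~\ref{cor:conj} applies (the measures $W(x,y)$ are never uniform)---are harmless elaborations of steps the paper leaves implicit.
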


\begin{figure}
\begin{center}
\epsfbox{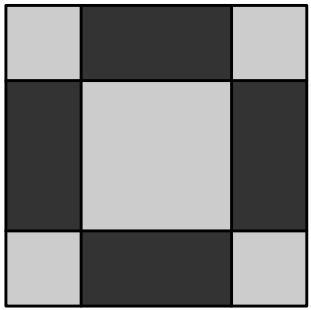}
\end{center}
\caption{Illustration of the structure of the Latinon $(W,f)$ from the statement of Proposition~\ref{prop:lower2}.
         The areas with light gray correspond to those where the support of the measure $W(x,y)$ is $[0,1/2]$ and
	 those with dark gray correspond to those where the support of the measure $W(x,y)$ is $[1/2,1]$.}
\label{fig:lower2}
\end{figure}

\begin{proof}
Corollary~\ref{cor:conj} implies that the Latinon $(W,f)$ is not quasirandom and
so we focus on computing the density of $2\times 2$ patterns in the Latinon $(W,f)$.
Fix a $2\times 2$ pattern $A$.
Sample two pairs of points from $\Omega$,
index them in such a way that $f(x_1)<f(x_2)$ and $f(y_1)<f(y_2)$, and
then sample points $z_{ij}$ from $W(x_i,y_j)$ for $i,j\in [2]$.
Let $P=[0,1/4]\cup (3/4,1]$ and $Q=(1/4,3/4]$, and observe that
the probability that both $x_1$ and $x_2$ belong to $P$ is $1/4$,
the probability that they both belong to $Q$ is also $1/4$,
the probability that $x_1$ belongs to $P$ and $x_2$ to $Q$ is also $1/4$, and
the probability that $x_1$ belongs to $Q$ and $x_2$ to $P$ is $1/4$, too.
Let $B$ now be the $2\times 2$ matrix $B$ such that the entry in the $i$-th row and
the $j$-th column is $X$ if $(x_i,y_j)\in X$ and $Y$ otherwise.
Since $X=P^2\cup Q^2$, it follows that $B$ is equal to each of the following eight matrices with probability $1/8$:
\[\begin{pmatrix} X & X \\ X & X \end{pmatrix}
  \begin{pmatrix} Y & Y \\ Y & Y \end{pmatrix}
  \begin{pmatrix} X & X \\ Y & Y \end{pmatrix}
  \begin{pmatrix} Y & Y \\ X & X \end{pmatrix}
  \begin{pmatrix} X & Y \\ X & Y \end{pmatrix}
  \begin{pmatrix} Y & X \\ Y & X \end{pmatrix}
  \begin{pmatrix} X & Y \\ Y & X \end{pmatrix}
  \begin{pmatrix} Y & X \\ X & Y \end{pmatrix}\]
As in the proof of Proposition~\ref{prop:lower},
we observe that
the probability that $z_{ab}<z_{a'b'}$ if and only if $A_{ab}<A_{a'b'}$ for all $a,b,a',b'\in [2]$,
is equal to $1/24$ conditioned on $B$ being one of the first two matrices, and
among the remaining six matrices,
there is exactly one such that
the probability is equal to $1/4$ conditioned on $B$ being this matrix, and
the probability is equal to $0$ conditioned on $B$ being one of the remaining five matrices.
We conclude that
\[t(W,A,f)=\frac{1}{8}\left(\frac{2}{24}+\frac{1}{4}\right)=\frac{1}{24}\,.\]
The proof of the proposition is now concluded.
\end{proof}

\section{Concluding remarks}

Instead of the notion of patterns considered in this paper,
it is possible to consider a more general notion of a pattern defined as follows:
a $k\times\ell$ \emph{generalized pattern} is a $k\times \ell$ matrix such that
$m$ of its entries are equal to $\star$ and the remaining entries contains each of the numbers $1,\ldots,k\ell-m$ exactly once.
For example, the following is a generalized $2\times 3$ pattern:
\[\begin{pmatrix}
  2 & 3 & \star \\
  \star & 4 & 1 \\
  \end{pmatrix}.\]
The informal meaning of entries equal to $\star$ is that their relative order is not restricted.
Formally, the \emph{density} of a $k\times\ell$ generalized pattern $A$ in a Latin square $L$ of order $n$,
which is denoted by $t(A,L)$,
is the probability that
a uniformly chosen random $k$-tuple $i_1<\cdots<i_k$ of indices between $1$ and $n$ and
a uniformly chosen random $\ell$-tuple $j_1<\cdots<j_\ell$ of indices between $1$ and $n$
satisfy for all $a,a'\in [k]$ and $b,b'\in [\ell]$ that
$L_{i_aj_b}<L_{i_{a'}j_{b'}}$ whenever $A_{ab}\not=\star$, $A_{a'b'}\not=\star$ and $A_{ab}<A_{a'b'}$;
if $k>n$ or $\ell>n$, then we again set $t(A,L)$ to be zero.
We will assume in what follows that a generalized pattern does not contain a row or a column with all entries equal to $\star$
since the removal of such a row or column does not asymptotically change the density of the pattern.

A careful inspection of the proof of Theorem~\ref{thm:main} yields that
the proof only requires the density of each of the following $72$ generalized patterns
\[\begin{pmatrix}
  \le 4 & \le 4 & 5 \\
  \le 4 & \le 4 & \star
  \end{pmatrix}
  \quad
  \begin{pmatrix}
  \le 4 & 5 & \le 4 \\
  \le 4 & \star & \le 4
  \end{pmatrix}
  \quad
  \begin{pmatrix}
  5 & \le 4 & \le 4 \\
  \star & \le 4 & \le 4
  \end{pmatrix}\]
to be equal to $1/120$ where $\le 4$ denotes numbers from $1$ to $4$ in an arbitrary order (note that
it would actually be sufficient to require only a partial order between the non-$\star$ entries of a generalized pattern
for the purpose of the proof of Theorem~\ref{thm:main}).
To complement this,
we claim that
any set of generalized patterns with at most four entries different from $\star$ does not have this property,
i.e., our result is the best possible also in the stronger setting of generalized patterns.
Indeed, consider a sequence of Latin squares converging to the Latinon constructed in the proof of Proposition~\ref{prop:lower}.
We say that a generalized pattern $A$ is \emph{eliminable}
if its entries different from $\star$ can be ordered in such a way that the following is satisfied:
each entry different from $\star$
is either the first element in the ordering among all entries in the same row, or
it is the first among all entries in the same column.
For example, the pattern
\[\begin{pmatrix}
  2 & 3 & \star \\
  \star & 4 & 1 \\
  \end{pmatrix}.\]
is eliminable as witnessed by ordering its entries different from $\star$ as, e.g., $2,3,4,1$;
here, for example, the entry $3$ is the first entry in the ordering among all entries in the same column.
Next observe that the density of any eliminable pattern in any sequence of Latin squares converging
to the Latinon from the proof of Proposition~\ref{prop:lower}
converges to $1/m!$ where $m$ is the number of entries different from $\star$.
Since the only non-eliminable patterns with at most four entries different from $\star$ (and no row or column formed by $\star$ only)
are $2\times 2$ patterns, each of which has the limit density equal to $1/24$ by Proposition~\ref{prop:lower}, the claim follows.

\section*{Acknowledgements}

The second author would like to thank Tomasz \L{}uczak for numerous discussions
on the combinatorial limit approach to extremal problems on Latin squares,
including their quasirandomness, during their visit to the Institut Mittag-Leffler in Djursholm, Sweden, in the spring 2014;
he is particularly indebted for gaining many valuable insights during the discussions.

\bibliographystyle{bibstyle}
\bibliography{qlatin}

\end{document}